\documentclass[12pt]{amsart}
\usepackage{amssymb}
\usepackage{amsthm}
\usepackage{verbatim}
\usepackage[mathscr]{euscript}
\usepackage{xspace}
\usepackage{tikz}

\usepackage{epsfig}  		

\textwidth 160mm 
\textheight227mm 
\topmargin-7mm \evensidemargin0mm
\oddsidemargin0.0mm

\def\ds{\displaystyle}
\newcommand{\R}{\ensuremath{\mathbb{R}}\xspace}

\newcommand{\supp}{\ensuremath{\mbox{supp}\,}\xspace}

\newcommand{\be}{\begin{equation}}
\newcommand{\ee}{\end{equation}}

\newcommand{\bn}{\begin{equation*}}      
\newcommand{\en}{\end{equation*}}
\newcommand{\bea}{\begin{align}}
\newcommand{\eea}{\end{align}}
\newcommand{\bs}{\begin{split}}
\newcommand{\es}{\end{split}}
\newcommand{\beq}{\begin{equation}}
\newcommand{\eeq}{\end{equation}}

\renewcommand{\bar}{\overline}

\newcommand{\om}{\omega}

\newcommand{\al}{\alpha}
\newcommand{\ga}{\gamma}

\newcommand{\pvint}{P.V.\!\!\int}

\newtheorem{theorem}{Theorem}[section] 
\newtheorem{lemma}[theorem]{Lemma} 
\newtheorem{proposition}[theorem]{Proposition}

\newtheorem{remark}[theorem]{Remark}

\theoremstyle{definition} 

\title{One-dimensional model equations for hyperbolic fluid flow}
\author{Tam Do}
\address{Tam Do, Rice University, Department of Mathematics-MS 136, Box 1892, Houston, TX 77251-1892}
\email{tam.do@rice.edu}
\author{Vu Hoang}
\address{Vu Hoang, Rice University, Department of Mathematics-MS 136, Box 1892, Houston, TX 77251-1892}
\email{Vu.Hoang@rice.edu}
\author{Maria Radosz}
\address{Maria Radosz, Rice University, Department of Mathematics-MS 136, Box 1892, Houston, TX 77251-1892}
\email{maria\_radosz@hotmail.com}
\author{Xiaoqian Xu}
\address{Xiaoqian Xu, Department of Mathematics, University of Wisconsin, Madison, WI 53706}
\email{xxu@math.wisc.edu}
\date{\today}

\begin{document}
\begin{abstract}
In this paper we study the singularity formation for two nonlocal 1D active scalar equations, focusing on
the hyperbolic flow scenario. Those 1D equations can be regarded as simplified models of some 2D fluid equations.
\end{abstract}

\maketitle
\section{Introduction}
The following transport equation
\begin{equation}\label{eq1}
\omega_t+u\cdot \nabla \omega=0.
\end{equation}
is a basic mathematical model in fluid dynamics. If $u$ depends on $\omega$, \eqref{eq1} is called an active scalar equation. 
The problem of deciding whether blowup can occur for smooth initial data becomes very
hard if the dependence of $\omega$ is nonlocal in space. 

The relationship expressing $u$ in terms of $\omega$ is commonly called Biot-Savart law.
We have the following examples in 2D: 
\begin{equation}\label{eq2}
u=\nabla^{\perp}(-\triangle)^{-1}\omega,
\end{equation}
where $\nabla^{\perp}=(-\partial_y,\partial_x)$ is the perpendicular gradient. Equations \eqref{eq1} and \eqref{eq2} are the vorticity form of 2D Euler equation.
When we take
$$
u=\nabla^{\perp}(-\Delta)^{-\frac{1}{2}}\omega,
$$
\eqref{eq1} becomes the surface quasi-geostrophic (SQG) equation, which has important applications in geophysics, or can be regarded
as a toy model for the 3D-Euler equations. For more details we refer to \cite{constantin1994formation}.

A question of great importance is whether solutions for these equations form singularities in finite time. A promising
new approach for the construction of singular solutions is to use the \emph{hyperbolic flow scenario}.
In \cite{HouLuo1}, \cite{HouLuo2}, such a scenario was proposed to obtain singular solutions for the 3D Euler equations, and in \cite{kiselev2013small},
the long-standing question of existence of solutions to the 2D Euler with double-exponential gradient growth was settled using
hyperbolic flow. 

The hyperbolic flow scenario in two dimensions can be explained in the following way. Consider e.g. a flow in the upper half-plane
$\{x_2 > 0 \}$. The essential properties required are (see Figure \ref{fig1} for an illustration):
\begin{itemize}
\item There is a stagnant point of the flow at one boundary point (e.g. the origin) for all times.
\item Along the boundary, the flow is essentially directed towards that point for all times.
\end{itemize}
Such flows can be created by imposing symmetry and other conditions on the initial data. For incompressible flows 
the stagnant point is a hyperbolic point of the velocity field, hence the name.

\begin{figure}[htbp]
\begin{center}
\includegraphics[scale=0.5]{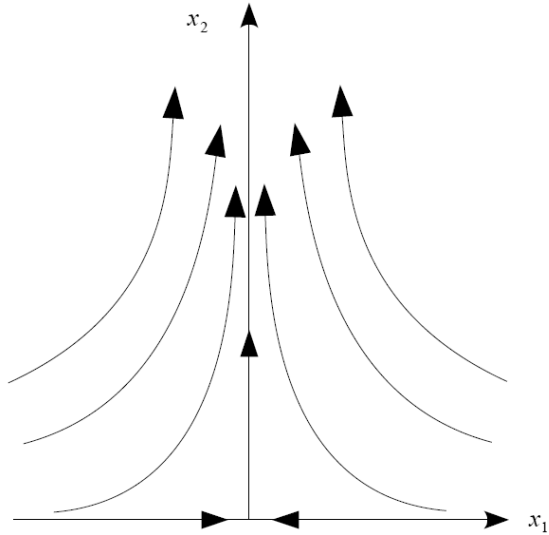}
\end{center}
\caption{Illustration of hyperbolic flow scenario in two dimensions.} \label{fig1}
\end{figure}

The scenario is a natural candidate for creating flows with strong gradient growth or finite-time blowup, since the fluid is compressed along the boundary. 
Due to non-linear and non-local interactions however, the flow remains hard to control, so a rigourous proof of blowup for the
3D Euler equations using hyperbolic flow remains a challenge. The crucial issue is to stabilize the scenario up to the singular time. 

One way to make progress in understanding and to gain insight into the hyperbolic blowup scenario is to study it in the context of
one-dimensional model equations. This was begun in \cite{CKY, sixAuthors}, where one-dimensional models for the 2D-Boussinesq 
and 3D axisymmetric Euler equations were introduced and blowup was proven.

One-dimensional models capturing other aspects of fluid dynamical equations have a long-standing tradition, one of the earliest being
the celebrated Constantin-Lax-Majda model \cite{CLM}. We refer to the introduction of \cite{silvestre2014transport} for 
a more thorough review of known one-dimensional model equations, and to \cite{sixAuthors} for discussion of the aspects relating
to the hyperbolic flow scenario.  

In this paper, we will study 1D models of (\ref{eq1}) on $\R$ with the following two choices of $u$:
\begin{eqnarray}
\label{eq3} u_x&=&H\omega,\\
\label{eq4} u&=&(-\Delta)^{-\frac{\alpha}{2}}\omega=-c_\alpha\int_{\R}|y-x|^{-(1-\alpha)}\omega(y,t)~ dy.
\end{eqnarray}
The choice \eqref{eq3} leads to a 1D analogue of the 2D Euler equation. This model is derived simply 
by restricting the dynamics to the boundary. In section \ref{sec:Euler} we give a brief heuristic argument which works by
assuming that $\om$ is concentrated in a small boundary layer.

We note that the model \eqref{eq3} was mentioned in \cite{sixAuthors}, where it was stated that \eqref{eq3} has properties
analogous to the 2D Euler equation, without giving details. In particular, in \cite{sixAuthors} a 1D model of the 2D Boussinesq equations (an extended version of \eqref{eq3}) was introduced and studied. One of our goals here is to validate the 1D model introduced in \cite{sixAuthors} in a setting where comparison with 2D results are available. The fact shown below, that the solutions to the model problem \eqref{eq3} behave similarly to the full 2D Euler case, provides support to the usefulness of the extended version of this model in \cite{sixAuthors} for getting insight into behavior of solutions to 2D Boussinesq system and 3D Euler equation.

The model defined by \eqref{eq4} is called $\alpha$-patch model and appears in \cite{dong2014one}, where also a viscosity term is present.
From the regularity standpoint, the $\alpha$-patch model is between 1D Euler $u_x = H \om$ and the C\'ordoba-C\'ordoba-Fontelos model $u=H\om$ (see \cite{CCF, silvestre2014transport}), which is an analogue of the SQG equation. These two models differ however from a geometric perspective, 
since the symmetry properties of the Biot-Savart laws are different. For the CCF model, the velocity field is odd for 
even $\om$, whereas \eqref{eq4} is odd for odd $\om$. It is important to choose
data with the right symmetry to make $u$ odd, and thus to create a stagnant point of the flow at the origin for all times.

We note that local existence and blowup results for \eqref{eq4} were given in \cite{dong2014one}, where also dissipation
is allowed. There the authors rely on a suitable Lyapunov function to show blowup, whereas we
emphasize the more geometric aspects in this paper. That is, we will be studying the analogue of the hyperbolic flow scenario for the above 1D models 
and show that this leads to natural and intuitive constructions of solutions with strong gradient growth and finite-time blowup. 

Another blowup result related to hyperbolic flow was recently proven by A. Kiselev, L. Ryzhik, Y. Yao and A. Zlato\v s \cite{KiselevLenyaYao} and concerns a 
$\alpha$-patch model in 2D for small $\alpha>0$.

\section{Euler 1D model}\label{sec:Euler}
\subsection{Heuristic derivation.}
Recall the 2D Euler equations in vorticity form
$$
\omega_t + u\cdot \nabla \omega = 0 
$$
where $u=\nabla^\perp (-\Delta)^{-1} \omega$.

We first indicate a simple heuristic motivation for the choice \eqref{eq3} (see also \cite{sixAuthors}). Consider the 2D Euler equation in
a half-space $\{x_2 \geq 0 \}$ and denote $\bar x = (x_1, -x_2)$. The $x_1$-component of the velocity (up to a normalization constant) 
for compactly supported vorticity $\om$ is given by
\beq
u_1(x, t) = -\int_{\R^2} \frac{(y_2-x_2)}{|y-x|^2} \om(y, t)~ dy
\eeq
where $\om$ has been extended to $\{ x_2 \leq 0 \}$ by odd reflection $(\om(\bar x, t)= -\om(x, t))$.

Suppose now that $\om$ is concentrated in a boundary layer of width $a>0$ and that $\om(x_1, x_2, t) = \om(x_1, t)$ in this boundary layer. Then a calculation gives
\beq\label{eq_u1}
u_1(x_1, 0, t) = -2 \int_{\R} \log\left( \frac{(y_1-x_2)^2+a^2}{(y_1-x_1)^2}\right) \om(y_1, t)~ dy_1.
\eeq
If we now retain only the singular part of the kernel $\log\left( \frac{z^2+a^2}{z^2}\right)\sim - 2 \log|z|$ and identify $u$ with $u_1$, we get
(dropping the constants) 
$$
u(x, t) =  \int_{\R}\log|y-x| \om(y, t)~ dy.
$$

So a reasonable 1D model is
\begin{equation}\label{1}
\omega_t+u\omega_x=0,\quad u_x=H\omega,\quad \omega(x,0)=\omega_0(x) \quad (x,t)\in \R\times [0,\infty).
\end{equation}
where $H$ is the Hilbert transform, using the convention 
$$
H \om(x, t) = \pvint\frac{\om(y, t)}{x-y}~ dy.
$$
For this model, we have the following local well-posedness property:
\begin{proposition}\label{local}
Given initial data $\omega_0\in H_0^{m}((0,1))$ with $m\geq 2$, there exists $T=T(\|\omega_0\|_{H_0^m})>0$ such that the system has a unique classical solution $\omega\in C([0,T];H^{m}_0)$.
\end{proposition}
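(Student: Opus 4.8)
The plan is to run the standard program for local well-posedness of a transport equation $\om_t+u\om_x=0$ whose Biot--Savart law is \emph{smoothing} — here $u_x=H\om$ gains a derivative over $\om$: prove an a priori estimate controlling $\no{\om(t)}_{H^m}$ on a short time interval, construct a solution by an approximation scheme that inherits this bound (for instance a vanishing-mollification argument in the spirit of Majda--Bertozzi, or iteration on the linear transport problems $\d_t\om^{(n+1)}+u^{(n)}\d_x\om^{(n+1)}=0$, each solved along characteristics, with $(\om^{(n)})$ bounded in $L^\infty_tH^m_0$ and Cauchy in $L^\infty_tL^2$), and finally establish uniqueness and strong continuity in time. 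In either construction the content is entirely in the a priori estimate, so I focus on that.

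For that estimate I would first fix the free constant in $u$ by $u(0,t)=0$, so that $u(x,t)=\int_0^xH\om(y,t)\,dy$, and record the structural facts about the law: $H$ is an $L^2$-isometry, commutes with $\d_x$, and preserves every $H^s$; hence $\no{\d_x^mu}_{L^2}=\no{\d_x^{m-1}H\om}_{L^2}\le\no{\om}_{H^m}$, and since $m\ge2>\tfrac12$ the Sobolev embedding gives $\no{u_x}_{L^\infty}=\no{H\om}_{L^\infty}\ale\no{\om}_{H^m}$ and $\no{\om_x}_{L^\infty}\ale\no{\om}_{H^m}$. Then I would apply $\d_x^m$ to \eqref{1} and pair with $\d_x^m\om$ in $L^2$: the top-order term is $\int u\,\d_x^{m+1}\om\cdot\d_x^m\om\,dx=-\tfrac12\int u_x(\d_x^m\om)^2\,dx$, with no boundary term because $\om(\cdot,t)\in H^m_0$, and the remaining terms are the commutator $[\d_x^m,u]\om_x$, estimated by the Kato--Ponce inequality as
\[
\no{[\d_x^m,u]\om_x}_{L^2}\ale\no{u_x}_{L^\infty}\no{\d_x^m\om}_{L^2}+\no{\d_x^mu}_{L^2}\no{\om_x}_{L^\infty}\ale\no{\om}_{H^m}^2 .
\]
Handling the lower-order norms in the same way — note that, since $u$ is not divergence free, $\no{\om}_{L^p}$ is not conserved, but $\tfrac{d}{dt}\no{\om}_{L^2}^2=\int u_x\om^2\,dx\ale\no{\om}_{H^m}\no{\om}_{L^2}^2$ still suffices — I arrive at $\tfrac{d}{dt}\no{\om}_{H^m}^2\ale\no{\om}_{H^m}^3$, which closes on $[0,T]$ for $T$ bounded below by a constant multiple of $\no{\om_0}_{H^m}^{-1}$, matching the claimed dependence of $T$ on the data.

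For uniqueness I would compare two solutions $\om^1,\om^2$ with the same data in the \emph{low} norm $L^2$ (to avoid losing a derivative): $w=\om^1-\om^2$ solves $w_t+u^1w_x=-(u^1-u^2)\om^2_x$, and since $\no{u^1-u^2}_{L^\infty}\ale\no{w}_{L^2}$ and $\no{u^1_x}_{L^\infty}\ale\no{\om^1}_{H^m}$, an $L^2$ energy estimate together with Gr\"onwall's inequality forces $w\equiv0$. To upgrade the solution produced by the construction from $C_w([0,T];H^m_0)\cap L^\infty([0,T];H^m_0)$ to $C([0,T];H^m_0)$ I would use the usual argument: show $t\mapsto\no{\om(t)}_{H^m}$ is continuous (via a Bona--Smith-type mollification of the data) and combine with weak continuity.

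The part I expect to be genuinely delicate is not the energy estimate — the velocity law is very regularizing — but the interplay of the space $H^m_0((0,1))$ with the nonlocal operator $H$: one has to pin down how $\om$ is extended off $(0,1)$ (equivalently, how $H$ is defined on $(0,1)$) and then check that the transporting flow fixes the endpoints $0$ and $1$, so that the vanishing boundary conditions built into $H^m_0$ propagate in time and the integrations by parts above produce no boundary contributions. This is exactly where the symmetry required of admissible initial data enters; once that setup is fixed, the estimates above go through without change.
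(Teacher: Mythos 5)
The paper offers no proof of this proposition (it is dismissed as ``standard''), and your outline is precisely the standard argument being alluded to: a Kato--Ponce commutator/energy estimate that closes because $u_x=H\om$ costs no derivatives (so $\|\d_x^m u\|_{L^2}\le\|\om\|_{H^m}$ and $\frac{d}{dt}\|\om\|_{H^m}^2\lesssim\|\om\|_{H^m}^3$), an iteration or mollification scheme inheriting the bound, $L^2$-uniqueness, and a Bona--Smith upgrade to strong continuity; this is correct. The one delicate point you flag is real but is an imprecision of the paper's statement rather than a gap in your proof: the paper's intended normalization is $u(x,t)=\int\log|y-x|\,\om(y,t)\,dy$ rather than $u(0,t)=0$ (the two differ only by a bounded function of time, so the estimates are unaffected), and under either choice the flow does not fix the endpoints of $(0,1)$, so for $t>0$ the solution should be read as a compactly supported $H^m$ function on $\R$ (with $H$ taken on the whole line after extension by zero) rather than literally an element of $H^m_0((0,1))$.
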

The proof is standard so we skip it here.

An alternative argument to motivate \eqref{eq3} is to observe that the gradient of the 2D Euler velocity is given by a
zero-th order operator acting on $\om$. In one dimension, this leaves only the choice $u_x = c H\om$ or $u_x = c\om$,
$c$ being a nonzero constant. So we could also consider the model 
\begin{equation}\label{alternativeModel}
\omega_t+u\omega_x=0,\quad u_x=-\omega,\quad \omega(x,0)=\omega_0(x) \quad (x,t)\in \R\times [0,\infty).
\end{equation}
\eqref{alternativeModel} is however not a close analogue of 2D Euler (see Remark \ref{rem:alternativeModel}).

\subsection{Sharp a-priori bounds for gradient growth.}
We will first prove the global regularity of the solution to equation \eqref{1} by showing that $\omega_x$ can grow at most 
with double exponential rate in time. Then we will give an example of a smooth solution to (\ref{1}) where such growth of 
the gradient of $\omega$ is achieved, meaning the bound is sharp.

Due to the Biot-Savart law relating $u$ and $\omega$, the proof of an upper bound for $\|\omega_x(\cdot, t)\|_\infty$ is very similar to the proof for 
the full 2D Euler equations. For the reader's convenience, we give the proof. 
Recall first the definition of the H\"older norm
$$
|| \omega ||_{C^\alpha} = \sup_{|x-y|\leq 1, x\neq y} \frac{|\omega(x)-\omega(y)|}{|x-y|^\alpha}
$$
for compactly supported $\om$.

We will need an estimate on the Hilbert transform: 
\begin{lemma}\label{lem1} Let $0<\alpha<1$. 
Suppose $\supp(\omega) \subset [-D(t),D(t)]$ and assume
without loss of generality that $||\omega_0||_{L^{\infty}}=1$. Then
$$
\|u_x\|_\infty\leq C(\alpha)\left(1+|\log(D(t))|+\log (1+\|\omega\|_{C^{\alpha}})\right)
$$
\end{lemma}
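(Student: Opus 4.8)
The plan is to split the principal-value integral defining $u_x = H\omega$ into three regions according to the distance $|x-y|$: a near region $|x-y| \le \rho$, a far region $\rho < |x-y| \le 1$, and a tail region $1 < |x-y|$, for a parameter $\rho \in (0,1)$ to be optimized at the end. First I would note that $\|\omega\|_\infty = 1$ is preserved by the transport equation (the flow is measure-preserving since $u$ is, formally, divergence-free in 1D in the trivial sense that the characteristic ODE $\dot x = u$ conserves the $L^\infty$ norm of $\omega$ along characteristics), so $\|\omega(\cdot,t)\|_\infty = \|\omega_0\|_\infty = 1$ for all $t$; this is what lets us write the bound in terms of $\|\omega_0\|_{L^\infty}$.

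For the near region, the standard trick is to use the oddness of the kernel $1/(x-y)$ to subtract $\omega(x)$: since $\mathrm{P.V.}\int_{|x-y|\le\rho} \frac{dy}{x-y} = 0$, we have
\[
\left| \mathrm{P.V.}\int_{|x-y|\le \rho} \frac{\omega(y)-\omega(x)}{x-y}\, dy \right| \le \int_{|x-y|\le\rho} \frac{\|\omega\|_{C^\alpha}|x-y|^\alpha}{|x-y|}\, dy = C(\alpha)\,\|\omega\|_{C^\alpha}\, \rho^\alpha,
\]
which is harmless as long as we take $\rho$ small in a way controlled by $\|\omega\|_{C^\alpha}$. For the far region $\rho < |x-y| \le 1$, I would just bound crudely: $|\omega(y)| \le 1$ and $\int_{\rho < |x-y|\le 1} |x-y|^{-1}\, dy = 2\log(1/\rho)$, giving a contribution $\le 2\log(1/\rho)$. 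For the tail region, I use that $\omega$ is supported in $[-D(t),D(t)]$; if $D(t) \le 1$ the tail region may be empty or we bound it by the measure of the support: the contribution is $\le \int_{|x-y|>1,\ |y|\le D(t)} |x-y|^{-1}\,dy$, which is $O(\log(2+D(t)))$ when $D(t)\ge 1$ and is absorbed into the constant otherwise. Combining, $\|u_x\|_\infty \le C(\alpha)\big(\|\omega\|_{C^\alpha}\rho^\alpha + \log(1/\rho) + 1 + |\log D(t)|\big)$.

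The one genuine step is the optimization in $\rho$: choosing $\rho = (1+\|\omega\|_{C^\alpha})^{-1/\alpha}$ makes the first term $O(1)$ and turns the second term into $\frac{1}{\alpha}\log(1+\|\omega\|_{C^\alpha})$, which is exactly the $\log(1+\|\omega\|_{C^\alpha})$ appearing in the statement (with the $\alpha^{-1}$ swallowed by $C(\alpha)$). One must check $\rho \le 1$, which holds since $\|\omega\|_{C^\alpha}\ge 0$. I expect no real obstacle here — the argument is the classical log-Lipschitz / Kato-type estimate for the Biot–Savart kernel specialized to one dimension — the only things to be careful about are the bookkeeping of which $\log$ terms land where, making sure the $|\log D(t)|$ term correctly captures both the $D(t)$ large and $D(t)$ small regimes (the support being possibly a tiny interval near a blowup point is the relevant case), and confirming that $\rho^\alpha$ times $\|\omega\|_{C^\alpha}$ is genuinely $O(1)$ rather than, say, $O(1)$ only up to a further log. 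A minor point is that $u_x = H\omega$ should be interpreted so that the principal value exists, which is guaranteed by $\omega \in C^\alpha$ with compact support, so the splitting above is rigorous.
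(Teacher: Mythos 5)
Your proposal is correct and follows essentially the same route as the paper: split the principal value at a scale $\delta$ (your $\rho$), use the oddness of the kernel plus the $C^\alpha$ modulus on the near part, bound the far part by $|\log\delta|+|\log D(t)|$ using $\|\omega\|_\infty\le 1$ and the support, and choose $\delta\sim(1+\|\omega\|_{C^\alpha})^{-1/\alpha}$; your extra three-way split and the (incidental, and not needed) remark about measure preservation do not change the substance.
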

\begin{proof}
For any $\delta>0$, we have
$$
\left|\int_{[-D(t),D(t)]\setminus (x-\delta,x+\delta)}\frac{\omega(y)}{x-y}~dy\right|\leq C\int_{\delta}^{D(t)}\frac{1}{y}~dy\leq C(|\log\delta|+|\log(D(t))|).
$$
Using the oddness of $\frac{1}{x}$, we have
$$
\left|\pvint_{x-\delta}^{x+\delta}\frac{\omega(y)}{x-y}~dy\right|=\left|\int_{x-\delta}^{x+\delta}\frac{1}{x-y}(\omega(y)-\omega(x))~dy\right|\leq C(\alpha)\|\omega(x,t)\|_{C^{\alpha}}\delta^{\alpha}.
$$
Choosing $\delta=\min\left\{1,(\frac{1}{\|\omega\|_{C^{\alpha}}})^{\frac{1}{\alpha}}\right\}$, we get the desired estimate of $\|u_x\|_\infty$.
\end{proof}

The following Lemma gives an estimate on $D(t)$.
\begin{lemma} \label{lem2}Suppose the support of $\omega_0$ is in $[-1,1]$ and $||\omega_0||_{L^{\infty}}=1$. Then the support of $\omega(x,t)$ will be inside $[-C\exp(Ce^{C t}),C\exp(Ce^{Ct})]$, for some universal constant $C>0$.
\end{lemma}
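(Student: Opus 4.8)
The plan is to use that $\om$ is transported along the characteristic flow of $u$, so that the support radius $D(t):=\sup\{\,|x|:x\in\supp\om(\cdot,t)\,\}$ cannot move faster than $\|u(\cdot,t)\|_\infty$ on the support, and then to bound that speed \emph{logarithmically} via Lemma~\ref{lem1}. Let $\Phi_t$ be the flow, $\partial_t\Phi_t(a)=u(\Phi_t(a),t)$, $\Phi_0=\mathrm{id}$; for as long as the classical solution persists it is a $C^1$ diffeomorphism of $\R$ and $\supp\om(\cdot,t)=\Phi_t(\supp\om_0)$. With the normalization $u(0,t)=0$ (automatic for odd data, or simply by taking $u(x,t)=\int_0^x H\om(z,t)\,dz$) the origin is a fixed point of $\Phi_t$, and for $|x|\le D(t)$ we have $|u(x,t)|=\big|\int_0^x u_x(z,t)\,dz\big|\le D(t)\,\|u_x(\cdot,t)\|_\infty$. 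This yields the first key bound: since $D(0)=1$,
\[
|\log D(t)|\;\le\;\intt\|u_x(\cdot,s)\|_\infty\,ds.
\]

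Second, I estimate $\|\om(\cdot,t)\|_{C^\alpha}$ so that Lemma~\ref{lem1} becomes usable. The maximum is conserved, $\|\om(\cdot,t)\|_\infty=\|\om_0\|_\infty=1$. Differentiating the equation in $x$ gives $\tfrac{D}{Dt}\om_x=-u_x\,\om_x$ along characteristics, hence
\[
\|\om_x(\cdot,t)\|_\infty\;\le\;\|\om_{0,x}\|_\infty\,\exp\!\Big(\intt\|u_x(\cdot,s)\|_\infty\,ds\Big).
\]
Interpolating, $|\om(x)-\om(y)|\le\min\{2,\|\om_x\|_\infty|x-y|\}\le 2\,\|\om_x\|_\infty^{\alpha}|x-y|^{\alpha}$ for $|x-y|\le1$, so $\|\om(\cdot,t)\|_{C^\alpha}\le 2(1+\|\om_x(\cdot,t)\|_\infty)$ and therefore $\log(1+\|\om(\cdot,t)\|_{C^\alpha})\le C_0+\intt\|u_x(\cdot,s)\|_\infty\,ds$, with $C_0$ depending only on $\|\om_{0,x}\|_\infty$ (finite since $\om_0\in H^m_0$, $m\ge2$).

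Third, close the loop. Writing $f(t):=\|u_x(\cdot,t)\|_\infty$, Lemma~\ref{lem1} together with the two displays above gives
\[
f(t)\;\le\;C(\alpha)\Big(1+|\log D(t)|+\log(1+\|\om(\cdot,t)\|_{C^\alpha})\Big)\;\le\;C_1\Big(1+\intt f(s)\,ds\Big)
\]
with $C_1=C_1(\alpha,\|\om_{0,x}\|_\infty)$. Grönwall's inequality then yields $f(t)\le C_2 e^{C_2 t}$, hence $\log D(t)\le\intt f(s)\,ds\le e^{C_2 t}$, i.e.\ $D(t)\le\exp(e^{C_2 t})$; after relabeling constants this is the asserted bound $D(t)\le C\exp(Ce^{Ct})$ (the left endpoint of the support is handled identically, with $\inf$ in place of $\sup$).

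I expect the only genuinely delicate step to be the first one: $D(t)$ is a supremum over a moving set and need not be differentiable in $t$, so the informal estimate ``$|\dot D|\le D\|u_x\|_\infty$'' should be obtained without differentiating $D$. The clean route is to estimate $\partial_x\Phi_t$ instead: from $\partial_t(\partial_x\Phi_t)=(u_x\circ\Phi_t)\,\partial_x\Phi_t$ one gets $e^{-\int_0^t\|u_x\|_\infty}\le|\partial_x\Phi_t|\le e^{\int_0^t\|u_x\|_\infty}$, and since $\Phi_t(0)=0$, $|\Phi_t(a)|=\big|\int_0^a\partial_x\Phi_t(z)\,dz\big|$ lies between $|a|e^{-\int_0^t\|u_x\|_\infty}$ and $|a|e^{\int_0^t\|u_x\|_\infty}$; taking the supremum over $a\in\supp\om_0$ (where $|a|\le1$ and $\sup|a|=1$) gives $e^{-\int_0^t\|u_x\|_\infty}\le D(t)\le e^{\int_0^t\|u_x\|_\infty}$ directly. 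Everything else is routine bookkeeping. Note that incompressibility is never invoked — only $u(0,t)=0$, which keeps $u$ small near the stagnation point and degrades the support estimate only by a logarithm, which is exactly what produces the double-exponential (rather than single-exponential) rate.
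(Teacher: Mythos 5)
Your strategy---controlling the transport speed of the support by $\|u_x\|_\infty$ and closing a Gr\"onwall loop through Lemma \ref{lem1}---is quite different from the paper's, which simply uses the explicit kernel $u(x,t)=\int\log|y-x|\,\om(y,t)\,dy$ to get $|u(x,t)|\le CD(t)\left(1+|\log D(t)|\right)$ on the support and then integrates the differential inequality $D'\le CD(1+|\log D|)$. Your route contains a genuine gap at its first step: the inequality $|u(x,t)|\le D(t)\|u_x(\cdot,t)\|_\infty$ requires $u(0,t)=0$, which you are not entitled to assume. The lemma makes no oddness hypothesis, and the velocity of the model is not ``any antiderivative of $H\om$'': it is the specific one $u(x,t)=\int\log|y-x|\,\om(y,t)\,dy$ coming from the derivation (and the one the paper's own estimate of $|u|$ uses). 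Replacing it by $\int_0^x H\om(z,t)\,dz$ adds a time-dependent constant to the drift and changes the solution, so that ``normalization'' is not free; for nonnegative $\om_0$ supported in $[-1,1]$ one has $u(0,t)=\int\log|y|\,\om(y,t)\,dy<0$. To repair this you must bound $|u|$ at some reference point, and the only available estimate is $|u(0,t)|\le\int_{-D}^{D}|\log|y||\,dy\le CD(1+|\log D|)$ --- which is precisely the paper's entire proof, at which point the detour through Lemma \ref{lem1} becomes superfluous.

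A second problem is that your constants are not universal. You insert $\log(1+\|\om(\cdot,t)\|_{C^\alpha})\le C_0+\int_0^t\|u_x\|_\infty\,ds$ with $C_0$ depending on $\|\om_{0,x}\|_\infty$ into the Gr\"onwall loop, so the resulting exponent in $D(t)\le\exp(e^{C_2t})$ depends on the initial gradient, whereas the lemma asserts a constant depending only on the normalizations $\supp\om_0\subset[-1,1]$ and $\|\om_0\|_\infty=1$. This matters structurally: Lemma \ref{lem2} is established \emph{before}, and used \emph{inside}, the proof of the double-exponential bound \eqref{doubleExpBound}, precisely so that the $\exp(Cs)$ term there carries no dependence on $\om_{0,x}$. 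Your argument essentially runs the Theorem 2.3 machinery inside Lemma \ref{lem2} and therefore proves a weaker statement than the one claimed.
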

\begin{proof}
 Suppose $\operatorname{supp} \omega = [-D(t),D(t)]$. Then for any point $x$ inside of this interval, we have 
 $$
 |u(x)|\leq \int_{-D(t)}^{D(t)}|\log|x-y||~dy\leq C\int_{0}^{2D(t)}|\log|s||~ds\leq C D(t)(|\log(D(t))|+1).
 $$
By following the trajectory of the particle at $D(t)$,
$$
D'(t)\leq C D(t)(|\log(D(t))|+1).
$$
A simple argument using differential inequalities shows that $D(t)$ is always less 
than $z(t)$, where $z(t)$ is the solution of 
$$
z'(t) = C z(t)(\log z(t)+ 1), ~~z(0) = \min\{ D(0), 2 \}.
$$
This yields the double-exponential upper bound on $D(t)$.
\end{proof}

The following Theorem gives the double exponential upper bound for $\omega_x$.
\begin{theorem}  There is universal constant $C$ such that if $\omega_0$ is smooth, compactly supported with $\supp \om_0\subset [-1, 1]$ and $\|\om\|_{L^\infty}=1$,   
\beq\label{doubleExpBound}
\log(1+\|\omega_x\|_{L^{\infty}})\leq C\log (1+\|(\omega_0)_x\|_{L^{\infty}}) e^{Ct}\quad (t\geq 0).
\eeq
\end{theorem}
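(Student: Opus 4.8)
The plan is to run the classical Yudovich--Beale--Kato--Majda gradient estimate, adapted to this 1D model. Work with the local classical solution furnished by Proposition \ref{local}. Two facts come for free from the transport structure: since $\om$ is transported along the flow $\Phi_t$ generated by $u$, one has $\|\om(\cdot,t)\|_{L^\infty}=\|\om_0\|_{L^\infty}=1$ for all $t$; and differentiating the equation in $x$ gives, along characteristics, $\frac{d}{dt}\bigl(\om_x\circ\Phi_t\bigr)=-(u_x\circ\Phi_t)\,(\om_x\circ\Phi_t)$. Integrating this exact ODE and taking the supremum over initial positions yields
\beq
N(t):=\log\bigl(1+\|\om_x(\cdot,t)\|_{L^\infty}\bigr)\ \le\ N(0)+\int_0^t\|u_x(\cdot,s)\|_{L^\infty}\,ds,
\eeq
so everything reduces to controlling $\int_0^t\|u_x\|_{L^\infty}\,ds$.

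Next I would feed Lemma \ref{lem1} and Lemma \ref{lem2} into this. Because $\om(\cdot,t)$ stays $C^1$ and compactly supported, for the fixed exponent $\alpha\in(0,1)$ one has $\|\om(\cdot,t)\|_{C^\alpha}\le\|\om_x(\cdot,t)\|_{L^\infty}$ (since $|x-y|\le|x-y|^\alpha$ when $|x-y|\le1$), so Lemma \ref{lem1} gives $\|u_x(\cdot,t)\|_{L^\infty}\le C(\alpha)\bigl(1+|\log D(t)|+N(t)\bigr)$. For the $|\log D(t)|$ term I would argue both directions: the upper bound $\log D(t)\le Ce^{Ct}$ is exactly Lemma \ref{lem2}, while if $D(t)$ is small then, picking a point where $|\om(\cdot,t)|=1$ and the nearest point outside the support (at distance $\le 2D(t)$), conservation of $\|\om\|_{L^\infty}$ forces $\|\om(\cdot,t)\|_{C^\alpha}\gtrsim D(t)^{-\alpha}$, hence $|\log D(t)|\le C(\alpha)\bigl(1+N(t)\bigr)$ as well. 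This is the point where one uses something other than incompressibility: the 1D flow need not be measure preserving, so the $C^\alpha$ bound (not area conservation, as in 2D) is what prevents the support from collapsing too fast. Combining, $\|u_x(\cdot,t)\|_{L^\infty}\le C_1 e^{Ct}+C_2 N(t)$ with constants depending only on $\alpha$.

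Then I would close the loop with Gr\"onwall. Plugging the last bound back in, $N(t)\le N(0)+\int_0^t\bigl(C_1 e^{Cs}+C_2 N(s)\bigr)\,ds$ (or, differentiating where it is legitimate, $N'(t)\le C_1 e^{Ct}+C_2 N(t)$), and Gr\"onwall's inequality integrates this to $N(t)\lesssim e^{C't}\bigl(N(0)+1\bigr)$. The last ingredient is the normalization built into the hypotheses: $\supp\om_0\subset[-1,1]$ and $\|\om_0\|_{L^\infty}=1$ force $\|(\om_0)_x\|_{L^\infty}\ge\frac{1}{2}$ — a point where $|\om_0|=1$ lies within distance $2$ of a zero of $\om_0$ (namely an endpoint of $[-1,1]$, at which $\om_0$ vanishes by continuity), so the mean value theorem applies — whence $N(0)=\log\bigl(1+\|(\om_0)_x\|_{L^\infty}\bigr)\ge\log\frac{3}{2}>0$. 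This positive lower bound lets the stray additive $e^{C't}$ be absorbed into $e^{C't}N(0)$, giving exactly \eqref{doubleExpBound}.

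The main obstacle, and the step I would be most careful about, is the self-referential character of the estimate: $\|u_x\|_{L^\infty}$ is bounded only by (essentially) the logarithm of a norm of $\om$ that the flow itself amplifies, so one must be disciplined about which scalar quantity carries the Gr\"onwall argument — $N(t)=\log(1+\|\om_x\|_{L^\infty})$ is the right choice, and the exponential growth of $\int_0^t\|u_x\|_{L^\infty}\,ds$ it produces is precisely what turns into double-exponential growth of $\|\om_x\|_{L^\infty}$. The only genuinely new technical wrinkle compared with the bounded-domain 2D Euler case is the two-sided control of $|\log D(t)|$, and I would handle the lower side exactly as indicated above, via $\|\om(\cdot,t)\|_{C^\alpha}\gtrsim D(t)^{-\alpha}$.
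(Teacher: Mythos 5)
Your proposal is correct and follows essentially the same route as the paper: reduce everything to $\int_0^t\|u_x\|_{L^\infty}\,ds$ via the transport structure, control $\|u_x\|_{L^\infty}$ through Lemmas \ref{lem1} and \ref{lem2}, and run Gr\"onwall on $\log(1+\|\omega_x\|_{L^\infty})$ (the paper phrases the first step as a two-sided Lipschitz bound on the flow map $\Phi_t^{\pm 1}$ rather than differentiating the equation along characteristics, which is equivalent). The only divergence is your lower bound on $|\log D(t)|$ via $\|\omega\|_{C^\alpha}\gtrsim D(t)^{-\alpha}$, which is correct but unnecessary --- in Lemma \ref{lem1} one may simply take $D(t)\ge 1$, since enlarging the containing interval only weakens the support hypothesis and increases the right-hand side --- while your explicit normalization $N(0)\ge\log\frac{3}{2}$ makes precise a step the paper leaves to ``elementary manipulations.''
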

\begin{proof}
We follow the proof in \cite{kiselev2013small}. Let us denote the flow map corresponding to the evolution by $\Phi_t(x)$. Then
$$
\frac{\partial}{\partial t}\Phi_t(x) = u(\Phi_t(x),t),\quad\Phi_0(x)=x,
$$
and
$$
\left|\frac{\partial_t|\Phi_t(x)-\Phi_t(y)|}{|\Phi_t(x)-\Phi_t(y)|}\right|\leq ||u_x||_{L^{\infty}}.
$$
After integration, and by Lemma \ref{lem1} and Lemma \ref{lem2}, this gives
$$
f(t)^{-1}\leq \frac{|\Phi_t(x)-\Phi_t(y)|}{|x-y|}\leq f(t),
$$
where
$$
f(t)=\exp\left(C\int_0^t(1+\exp(Cs)+\log(1+||\omega_x||_{L^{\infty}}))~ds\right).
$$
This bound also holds for $\Phi_t^{-1}$. On the other hand,
$$
\|\omega_x\|_{L^{\infty}}=\sup_{x\ne y}\frac{|\omega_0(\Phi_t^{-1}(x))-\omega_0(\Phi_t^{-1}(y))|}{|x-y|}\leq \|(\omega_0)_x\|\sup_{x\ne y}\frac{|\Phi_t^{-1}(x)-\Phi_t^{-1}(y)|}{|x-y|}.
$$
Which means we have
$$
(1+\|\omega_x\|_{L^{\infty}})\leq (1+\|(\omega_0)_x\|_{L^{\infty}})\exp\left(C\int_0^t 1+\exp(Cs)+\log(1+\|\omega_x\|_{L^{\infty}})~ds\right),
$$
or
$$
\log(1+\|\omega_x\|_{L^{\infty}})\leq\log(1+\|(\omega_0)_x\|_{L^{\infty}})+C\exp(Ct)+C\int_0^t(1+\log(1+\|\omega_x\|_{L^{\infty}}))~ds.
$$
So $y(t):= \log(1+\|\omega_x\|_{L^{\infty}})$ satisfies the integral inequality   
$$
y'(t)\leq y(0) + C e^{C t} + \int_0^t (1+y(s))~ds
$$
and by the integral form Gronwall's inequality and some elementary manipulations,
we arrive at the bound $y(t) \leq C_1 y(0) e^{C_2 t}$. This yields the desired 
bound on $\|\om_x\|_{\infty}$. 
\end{proof}
\begin{remark}\label{rem:alternativeModel}
If we choose our Biot-Savart law to be $u_x=-\omega$, then from a modification of the above proof we get an 
exponential upper bound for $\|\omega_x\|_{L^{\infty}}$.
This is different from the 2D Euler equation, which suggests that \eqref{1} is a better analogue of the 2D Euler equation
than \eqref{alternativeModel}.  Moreover the equation \eqref{alternativeModel} also has different symmetry properties.
\end{remark}

Next we construct initial data $\omega_0$ such that $\|\omega_x(\cdot,t)\|_{L^{\infty}}$ grows with double-exponential rate,
proving the sharpness of the a-priori bound \eqref{doubleExpBound}. The hyperbolic flow scenario is created in the following way.
First, we require that the initial data $\om_0$ is odd with respect to the origin, and has compact support. 
By Proposition \ref{local}, the oddness is easily seen to be preserved by the evolution. Consequently, the velocity field (which is also an odd function) can be written as
\beq\label{oddBiotSavart}
u(x, t) = - x \int_0^\infty K\left(\frac{x}{y}\right) \frac{\om(y,t)}{y}~dy \quad (x > 0),
\eeq
where 
\beq\label{defK}
K(s) := \frac{1}{s}\log\left|\frac{s+1}{s-1}\right|.
\eeq
Note that the origin is a stagnant point of the flow for all times.
By taking $\om_0$ to be positive on the right, the direction of the flow is towards the origin. More precisely, 
$\om_0$ is defined as follows (see Figure \ref{fig2}):
\begin{itemize}
\item
Let $\omega_0$  be supported on $[-1,1]$, smooth and odd. Choose numbers $0<x_1(0) < 2 x_2(0) <1$ such that $M x_1(0)\le x_2(0)$, where $M$ will be determined later.
Require that $\omega_0$ is increasing on $[0,x_1(0)]$, decreasing on $[x_2(0),1]$ and identically 1 on $[x_1(0),x_2(0)]$.
\end{itemize} 
Using the earlier notation $\Phi_t$ for the flow map associated to $\eqref{1}$, let
\begin{align*}
x_1(t) &:= \Phi_t(x_1(0)) \\
x_2(t) &:= \Phi_t(x_2(0))
\end{align*}
It is easy to see that the general structure of $\omega_0$ will be preserved by the flow: For fixed $t$, $\omega(x,t)$ will be increasing on $[0,x_1(t)]$, decreasing on $[x_2(t),1]$ and identically 1 on $[x_1(t),x_2(t)]$. In fact, since $u(x,t)\le 0$ for $x\ge 0$, $x_1(t)$ and $x_2(t)$ will be moving towards the origin in time. We will show that the quantity $\ds\frac{x_2(t)}{x_1(t)}$ increases double exponentially in time. This is sufficient to conclude the desired growth of $\|\omega_x(\cdot, t)\|_{L^\infty}$.

\begin{figure}
\begin{center}
\includegraphics[scale=0.6]{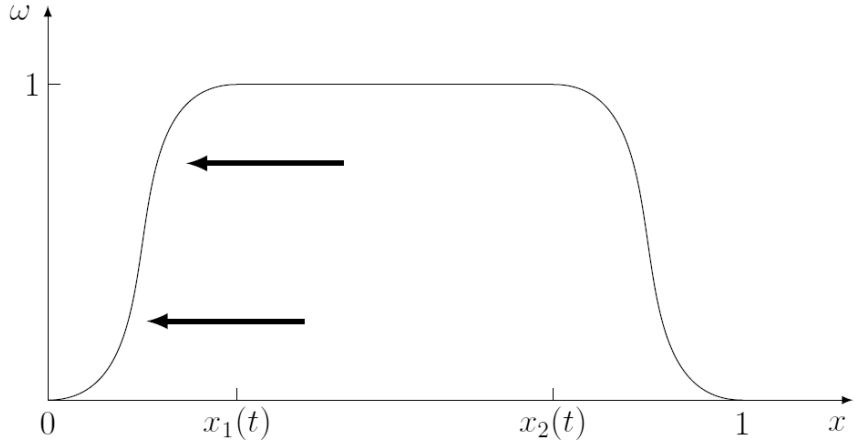}
\end{center}
\caption{Structure of $\om(x, t)$.}\label{fig2}
\end{figure}

\begin{theorem}\label{doubleExpGrowth} Assume our initial data is defined as above, then 
$$\log \frac{x_2(t)}{x_1(t)}\geq \log \frac{x_2(0)}{x_1(0)} \exp(Ct) \quad (t>0),$$
for some positive constant $C$. As a consequence, 
$$
\log \|\omega_x(\cdot,t)\|_{L^\infty}\ge C_1 \exp(C_2 t)\quad (t>0)
$$
for some $C_1, C_2 > 0$.
\end{theorem}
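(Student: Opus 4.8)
The plan is to track the evolution of $x_1(t)$ and $x_2(t)$ under the flow and show that their ratio grows double-exponentially. By the oddness of $\om$ and the Biot-Savart representation \eqref{oddBiotSavart}, for $x>0$ we have
$$
\frac{d}{dt}\log x_j(t) = \frac{u(x_j(t),t)}{x_j(t)} = -\int_0^\infty K\!\left(\frac{x_j(t)}{y}\right)\frac{\om(y,t)}{y}~dy,
$$
so that
$$
\frac{d}{dt}\log\frac{x_2(t)}{x_1(t)} = \int_0^\infty \left[K\!\left(\frac{x_1(t)}{y}\right) - K\!\left(\frac{x_2(t)}{y}\right)\right]\frac{\om(y,t)}{y}~dy.
$$
First I would establish the elementary properties of the kernel $K(s)=\frac1s\log\left|\frac{s+1}{s-1}\right|$: it is positive, it is decreasing on $(0,1)$ (and on $(1,\infty)$), it blows up logarithmically at $s=1$, behaves like $-\log|s|$ near $s=0$ times a constant... more precisely $K(s)\to +\infty$ as $s\to 0^+$ like $2$ (since $\log|(s+1)/(s-1)|\sim 2s$), wait — one should check: as $s\to 0$, $\log\frac{1+s}{1-s}\sim 2s$, so $K(s)\to 2$. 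And $K(s)\sim \frac{2}{s^2}$ as $s\to\infty$. The key monotonicity fact I need is that $s\mapsto K(s)$ is \emph{decreasing} on $(0,1)$, so that for $y$ in the bulk region $[x_2(t),1]$ where $\om\equiv 1$ (and actually all the way down, using $x_1(t)<x_2(t)$), the bracket $K(x_1/y)-K(x_2/y)$ is nonnegative, giving a favorable sign.

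The main estimate is a lower bound on the integral. The dominant contribution comes from the region $y\in[x_2(t),1]$, where $\om(y,t)\equiv 1$. On this region $x_1(t)/y$ and $x_2(t)/y$ are both at most $1$ (and bounded away from the singularity once $y\ge x_2(t)$, with the worst case $y=x_2(t)$ giving $K(1)=\infty$ — actually a \emph{help}, but one should be careful and perhaps integrate slightly away from $y=x_2$). I would estimate
$$
\int_{x_2(t)}^{1}\left[K\!\left(\frac{x_1(t)}{y}\right)-K\!\left(\frac{x_2(t)}{y}\right)\right]\frac{dy}{y} \ge \int_{x_2(t)}^{1}\left[K\!\left(\frac{x_1(t)}{y}\right)-K(1)\cdot\mathbf{1}\right]\cdots
$$
— more robustly, use the substitution $y = x_2(t)/r$ or $y=x_1(t)/r$ to turn these into fixed integrals of $K$ over ranges like $[x_1(t),x_2(t)]$, $[x_2(t),1]$, and extract a lower bound proportional to $\log\frac{x_2(t)}{x_1(t)}$. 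Concretely: $\int_{x_1(t)}^{x_2(t)} K(x_1(t)/y)\,dy/y = \int_{x_1(t)/x_2(t)}^{1} K(r)\,dr/r$, and since $K(r)\ge K(1^-)$ is false (K blows up), but $K(r)\ge c_0>0$ on $[0,1]$, this is $\ge c_0\log\frac{x_2(t)}{x_1(t)}$. Meanwhile the "bad" terms (the $K(x_2/y)$ piece and the contributions from $y\in(0,x_2(t))$ and $y\in(1,\infty)$, and from the parts of $[0,1]$ where $\om<1$) are either of favorable sign or bounded by a universal constant — here I use $\om\in[0,1]$, oddness, and compact support to control everything by $\|\om\|_\infty$ times $\int_0^\infty K(x/y)\,dy/y = \int_0^\infty K(r)\,dr/r < \infty$. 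The net result should be
$$
\frac{d}{dt}\log\frac{x_2(t)}{x_1(t)} \ge c_0\log\frac{x_2(t)}{x_1(t)} - C,
$$
and since the initial ratio satisfies $x_2(0)/x_1(0)\ge M$ with $M$ large (to be chosen so that $c_0\log M - C \ge \tfrac{c_0}{2}\log M$, and so the right side stays positive), Gronwall gives $\log\frac{x_2(t)}{x_1(t)}\ge \log\frac{x_2(0)}{x_1(0)}e^{Ct}$.

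For the consequence on $\|\om_x\|_\infty$: since $\om$ is identically $1$ on $[x_1(t),x_2(t)]$ and vanishes at the origin while remaining odd and bounded by $1$, on the interval $[0,x_1(t)]$ the function $\om$ rises from $0$ to $1$, so by the mean value theorem $\|\om_x(\cdot,t)\|_\infty \ge 1/x_1(t)$. Combined with the fact that $x_2(t)\le x_2(0)<1$ is bounded (indeed $x_2(t)$ decreases), $\log\|\om_x\|_\infty \ge \log\frac{1}{x_1(t)} \ge \log\frac{x_2(t)}{x_1(t)} \ge \log\frac{x_2(0)}{x_1(0)}e^{Ct}$, which is the claimed bound with $C_1 = \log\frac{x_2(0)}{x_1(0)}$, $C_2 = C$.

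The step I expect to be the main obstacle is making the lower bound on $\frac{d}{dt}\log\frac{x_2}{x_1}$ clean and honest near the kernel singularity at $s=1$: one must make sure the logarithmic blowup of $K$ at $y=x_1(t)$ and $y=x_2(t)$ doesn't spoil integrability and, conversely, that one doesn't accidentally need it to get the bound (it should be avoidable — using only $K\ge c_0$ on a sub-interval bounded away from the endpoints suffices). The second delicate point is the choice of $M$: one needs $M$ large enough (depending only on the universal constants $c_0, C$ above) that the $-C$ defect is absorbed, and one must verify this can be done independently of $t$, which is fine since the differential inequality is autonomous.
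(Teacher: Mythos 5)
Your overall strategy is the same as the paper's: write $\frac{d}{dt}\log\frac{x_2}{x_1}$ as an integral of $\bigl[K(x_1/y)-K(x_2/y)\bigr]\frac{\om(y)}{y}$, extract a main term $\gtrsim \log\frac{x_2}{x_1}$ from the plateau where $\om\equiv 1$, bound the rest by a constant, and close with Gronwall after choosing $M$ large; the deduction of the $\|\om_x\|_\infty$ bound via the mean value theorem is also the paper's. However, there are two concrete errors in your kernel analysis. First, $K$ is \emph{increasing} on $(0,1)$, not decreasing: from $\log\frac{1+s}{1-s}=2(s+s^3/3+\cdots)$ one gets $K(s)=2(1+s^2/3+s^4/5+\cdots)$, rising from $K(0^+)=2$ to $+\infty$ at $s=1^-$. (You even computed $K(0^+)=2$ and $K(1^-)=+\infty$ yourself, which already contradicts "decreasing".) Consequently, for $y>x_2$ the bracket $K(x_1/y)-K(x_2/y)$ is \emph{negative}, so the region $y\in[x_2,1]$ is not the favorable dominant contribution but an error term that must be bounded. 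The genuine main term lives between $x_1$ and $x_2$ (the paper uses $y\in[2x_1,\tfrac12 x_2]$, where $K(x_1/y)\ge 2$ and $K(x_2/y)\le K(2)=\tfrac12\log 3$); your alternative extraction $\int_{x_1}^{x_2}K(x_1/y)\,dy/y\ge 2\log\frac{x_2}{x_1}$ minus $\int_1^{x_2/x_1}K(r)\,dr/r\le\int_1^\infty K(r)\,dr/r<\infty$ is fine and salvages that part.

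The more serious gap is your proposed control of the error terms by $\|\om\|_\infty\int_0^\infty K(r)\,\frac{dr}{r}$: that integral \emph{diverges}, since $K(r)\to 2$ as $r\to 0^+$ makes $\int_0 K(r)\frac{dr}{r}$ logarithmically divergent. Concretely, on $y\in(2x_2,1]$ each of $\int K(x_1/y)\frac{dy}{y}$ and $\int K(x_2/y)\frac{dy}{y}$ is of size $2\log\frac{1}{x_2}$, which is unbounded as $x_2(t)$ drifts toward the origin, so the two kernel contributions cannot be bounded separately there. One must exploit the cancellation $K(x_1/y)-K(x_2/y)=O\bigl((x_2/y)^2\bigr)$ for $y\gg x_2$ (or compute the antiderivatives explicitly, as the paper does for its term $IV$, where the $-2\log x_2$ contributions cancel). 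Incidentally, the obstacle you flagged as the main one — the logarithmic singularity of $K$ at $s=1$ — is harmless because $K>0$ and the singularity is integrable; the real delicacy is this far-field cancellation. With those two repairs your argument goes through and coincides in substance with the paper's Lemma \ref{lem5}.
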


Theorem 2.4 quickly follows from the following Lemma:
\begin{lemma}\label{lem5} Suppose $1\geq x_2\geq 8x_1$. There are universal constants $C_0$ and $C_1$ so that 
$$
\frac{d}{dt}\left(\frac{x_2}{x_1}\right)\geq C_1 \frac{x_2}{x_1}\left(\log\left(\frac{x_2}{x_1}\right)-C_0\right).
$$
\end{lemma}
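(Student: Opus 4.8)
The plan is to differentiate $x_2/x_1$ along the flow and reduce the statement to one quantitative estimate for the velocity. Writing $x_i=x_i(t)$ and using $x_i'(t)=u(x_i,t)$ together with the Biot--Savart formula \eqref{oddBiotSavart}, one gets
\[
\frac{d}{dt}\Big(\frac{x_2}{x_1}\Big)=\frac{x_2}{x_1}\Big(\frac{u(x_2,t)}{x_2}-\frac{u(x_1,t)}{x_1}\Big)=\frac{x_2}{x_1}\,I,\qquad I:=\int_0^\infty\Big(K\big(\tfrac{x_1}{y}\big)-K\big(\tfrac{x_2}{y}\big)\Big)\frac{\om(y,t)}{y}\,dy .
\]
Thus it suffices to prove $I\ge 2\log(x_2/x_1)-C_0'$ for a universal $C_0'$; the claim then follows with $C_1=2$, $C_0=C_0'/2$. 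Throughout I use only the properties of $\om(\cdot,t)$ preserved by the transport: $0\le\om(\cdot,t)\le1$, $\om(\cdot,t)$ vanishes outside $[-1,1]$, and $\om(\cdot,t)\equiv1$ on $[x_1(t),x_2(t)]$; also $x_2(t)\le x_2(0)<\tfrac12$ since the flow moves the positive axis towards the origin. Set $R:=x_2/x_1$; by hypothesis $R\ge8$.

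Next I would record the elementary behaviour of $K(s)=\frac1s\log\big|\frac{s+1}{s-1}\big|$. From the expansion $K(s)=2\sum_{k\ge0}\frac{s^{2k}}{2k+1}$ for $|s|<1$ one reads off that $K$ is smooth and increasing on $[0,1)$ with $K\ge2$ there and $K'(s)\le\frac{2s}{1-s^2}\le\frac83$ on $[0,\tfrac12]$. On $(1,\infty)$, $K(s)=\frac1s\log\big(1+\frac{2}{s-1}\big)$ is a product of two positive decreasing functions, so $K$ is positive and strictly decreasing there, and $\Lambda:=\int_1^\infty\frac{K(s)}{s}\,ds<\infty$ (the integrand is $O(\log\frac{2}{s-1})$ near $1$ and $O(s^{-3})$ at infinity).

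Then I would split $I=I_1+I_2+I_3$ over $y\in(0,x_1)$, $(x_1,x_2)$, $(x_2,\infty)$. For $I_1$: when $0<y<x_1$ we have $1<\tfrac{x_1}{y}<\tfrac{x_2}{y}$, and since $K$ decreases on $(1,\infty)$ the kernel is positive; as $\om\ge0$, $I_1\ge0$. For $I_2$: here $\om\equiv1$, and substituting $s=x_1/y$ and $s=x_2/y$,
\[
I_2=\int_{1/R}^1\frac{K(s)}{s}\,ds-\int_1^R\frac{K(s)}{s}\,ds\ \ge\ \int_{1/R}^1\frac{2}{s}\,ds-\Lambda\ =\ 2\log R-\Lambda,
\]
using $K\ge2$ on $(0,1)$. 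For $I_3$: when $x_2<y<1$ we have $\tfrac{x_1}{y}<\tfrac{x_2}{y}<1$, where $K$ increases, so the kernel is negative and, using $0\le\om\le1$ together with $\om\equiv0$ past the support,
\[
|I_3|\ \le\ \int_{x_2}^1\Big(K\big(\tfrac{x_2}{y}\big)-K\big(\tfrac{x_1}{y}\big)\Big)\frac{dy}{y}.
\]
Split this at $y=2x_2$ (legitimate as $x_2<\tfrac12$). On $(x_2,2x_2)$ bound the difference by $K(x_2/y)$ and substitute $s=x_2/y$ to get $\int_{1/2}^1\frac{K(s)}{s}\,ds$. On $(2x_2,1)$ one has $\tfrac{x_1}{y},\tfrac{x_2}{y}<\tfrac12$, so by the mean value theorem $K(\tfrac{x_2}{y})-K(\tfrac{x_1}{y})\le\frac83\cdot\frac{x_2-x_1}{y}\le\frac83\cdot\frac{x_2}{y}$, whence $\int_{2x_2}^1\frac83\frac{x_2}{y^2}\,dy<\frac43$. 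Hence $|I_3|\le\int_{1/2}^1\frac{K(s)}{s}\,ds+\frac43=:c$, a universal constant, and combining the three pieces gives $I\ge2\log R-\Lambda-c$, i.e.\ $C_0':=\Lambda+c$.

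I expect the bound on $I_3$ to be the only delicate point. The naive estimate $|I_3|\le\int_{x_2}^1K(x_2/y)\,\frac{dy}{y}$ is of size $\sim2|\log x_2|$, which is \emph{not} a universal constant (and $x_2(t)\to0$ along the flow); one really has to exploit that $K$ is flat near $0$, so that $K(\tfrac{x_2}{y})-K(\tfrac{x_1}{y})=O\big(\tfrac{x_2-x_1}{y}\big)$ once the arguments are below $\tfrac12$, and this is integrable against $dy/y$ with a bound independent of $x_1,x_2$. Everything else is routine changes of variables and the monotonicity of $K$.
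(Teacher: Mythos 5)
Your proof is correct. It follows the same overall strategy as the paper --- differentiate the ratio, express $\frac{u(x_2)}{x_2}-\frac{u(x_1)}{x_1}$ through the kernel $K$, and split the integral so that the plateau region where $\om\equiv 1$ produces the logarithmic gain while the remaining regions contribute $O(1)$ --- but with a genuinely different decomposition and different technical estimates. The paper cuts at $2x_1$, $\tfrac12 x_2$, $2x_2$, extracts the main term $(2-\tfrac12\log 3)\log\frac{x_2}{x_1}$ from the piece $(2x_1,\tfrac12 x_2)$ via pointwise bounds $K(x_1/y)\ge 2$, $K(x_2/y)\le K(2)$, and controls the outer piece $IV$ by computing antiderivatives of $\frac{1}{x_i}\log\frac{y+x_i}{y-x_i}$ explicitly and exhibiting the cancellation of the $\log x_2$ terms by hand. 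You cut at the natural points $x_1,x_2$: the exact substitution $s=x_i/y$ turns the middle piece into $\int_{1/R}^1 K(s)\tfrac{ds}{s}-\int_1^R K(s)\tfrac{ds}{s}\ge 2\log R-\Lambda$, which gives the slightly sharper constant $C_1=2$ with no explicit integration, and the tail is handled by the mean value theorem with $K'(s)\le \frac{2s}{1-s^2}$ on $[0,\tfrac12]$ --- this is exactly the flatness of $K$ near $0$ that makes $|I_3|$ universal, and you correctly flagged it as the one delicate point (the naive bound is $\sim 2|\log x_2|$, which is what the paper's explicit computation of $(i)-(ii)$ is also designed to beat). Two minor remarks: your argument never actually uses $x_2\ge 8x_1$ (only $x_1<x_2$), so your version is marginally more general than the paper's, whose cut points require $2x_1<\tfrac12 x_2$; and the split at $y=2x_2$ is harmless even when $2x_2\ge 1$ since the second piece of $I_3$ is then empty, so the lemma holds under the stated hypothesis $x_2\le 1$ without appealing to $x_2(0)<\tfrac12$.
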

\begin{proof}
First observe
\begin{align*}
\frac{d}{dt}\left(\frac{x_2}{x_1}\right) &=\frac{x'_2x_1-x'_1x_2}{x^2_1} =\frac{u(x_2)x_1-u(x_1)x_2}{x_1^2} =\frac{x_2}{x_1}\left(\frac{u(x_2)}{x_2}-\frac{u(x_1)}{x_1}\right)\\
&=\frac{x_2}{x_1}\int_0^1 \left[K\left(\frac{x_1}{y}\right)-K\left(\frac{x_2}{y}\right)\right]\frac{\omega(y)}{y}~dy.
\end{align*}
We decompose the integral into 4 pieces which we will estimate separately:
\begin{align*}
&\int_0^1 \left[K\left(\frac{x_1}{y}\right)-K\left(\frac{x_2}{y}\right)\right]\frac{\omega(y)}{y}~dy\\
=&\int_0^{2x_1}+\int_{2x_1}^{\frac{1}{2}x_2}+\int_{\frac{1}{2}x_2}^{2x_2}+\int_{2x_2}^1\left[K\left(\frac{x_1}{y}\right)-K\left(\frac{x_2}{y}\right)\right]\frac{\omega(y)}{y}~dy\\
=& I+II+III+IV.
\end{align*}
For $I$, we use $0\le \om(y)\le 1$ and $2x_1 \le x_2\leq 1$:
\begin{align*}
0\le I&\le \int_0^{2x_1}\frac{1}{x_1}\log\frac{(x_1+y)}{|x_1-y|}~dy
+\int_0^{2x_1}\frac{1}{x_2}\log\frac{(x_2+y)}{|x_2-y|}~dy\\
&=\frac{1}{x_1}3x_1\log 3+ \frac{1}{x_2}\left[2x_1 \log\frac{1+\frac{2x_1}{x_2}}{1-\frac{2x_1}{x_2}}+x_2\log\left(1-\frac{2x_1}{x_2}\right)+x_2\log\left(1+\frac{2x_1}{x_2}\right)\right]\\
&\leq 3\log 3 + 2\log 2 .
\end{align*}
Using the fact that $K(s)$ is increasing in $[0,1)$ and decreasing in $(1,\infty]$ and that $\om(y)=1$ for $y\in(2x_1,\frac{1}{2}x_2)$ we get
\begin{align*}
II&=\int_{2x_1}^{\frac{1}{2}x_2}\left[K\left(\frac{x_1}{y}\right)-K\left(\frac{x_2}{y}\right)\right]\frac{\omega(y)}{y}~dy \geq\int_{2x_1}^{\frac{1}{2}x_2}(2-\frac{1}{2}\log(3))\frac{1}{y}~dy\\
&= (2-\frac{1}{2}\log(3))\log\left(\frac{x_2}{x_1}\right)-C.
\end{align*}
Using the positivity of $K$, 
\begin{align*}
III&=\int_{\frac{1}{2}x_2}^{2x_2}\left[K\left(\frac{x_1}{y}\right)-K\left(\frac{x_2}{y}\right)\right]\frac{\omega(y)}{y}~dy \geq -\int_{\frac{1}{2}x_2}^{2x_2}K\left(\frac{x_2}{y}\right)\omega(y)\frac{1}{y}~dy\\
&\geq-\int_{\frac{1}{2}}^2\frac{1}{s^2}\log\frac{|s+1|}{|s-1|}~ds \geq -C.
\end{align*}
We estimate $IV$ in the following way, using that $\om(y)\le 1$ and $\frac{x_1}{y}\le \frac{x_2}{y}\le1$ for $2x_2\le y \le 1$:
\begin{align*}
|IV|&=\left|\int_{2x_2}^1\left[K\left(\frac{x_1}{y}\right)-K\left(\frac{x_2}{y}\right)\right]\frac{\omega(y)}{y}~dy \right| \leq \int_{2x_2}^1\left[K\left(\frac{x_2}{y}\right)-K\left(\frac{x_1}{y}\right)\right]\frac{1}{y}~dy \\
&\leq \int_{2x_2}^1\frac{1}{x_2}\log\frac{y+x_2}{y-x_2}~dy-\int_{2x_2}^1\frac{1}{x_1}\log\frac{y+x_1}{y-x_1}\\
&=(i)-(ii).
\end{align*}
We can compute $(i)$ directly and get
\begin{align*}
(i)=\frac{1}{x_2}\log \frac{1+x_2}{1-x_2}+\log(1+x_2)(1-x_2)-2\log(x_2)-3\log(3).
\end{align*}
Similarly, for $(ii)$, we have
\begin{align*}
(ii)&=\frac{1}{x_1}\log\frac{1+x_1}{1-x_1}+\log(x_1+1)(1-x_1)-2\frac{x_2}{x_1}\log\frac{2x_2+x_1}{2x_2-x_1}-\log(2x_2+x_1)(2x_2-x_1).
\end{align*}
Note that in the expressions for $(i)$ and $(ii)$, all terms can be bounded by universal constants
except for $-2\log(x_2)$ and $\log(2x_2+x_1)(2x_2-x_1)$.
However, using $x_1<x_2$, we get
\begin{align*}
|IV| \le C -2\log(x_2)+\log(2x_2+x_1)(2x_2-x_1) =C +\log\left(4-\left(\frac{x_1}{x_2}\right)^2\right) \le C.
\end{align*}
\end{proof}
The proof of Theorem \ref{doubleExpGrowth} is now completed as follows: choose $M>8$ so large such that $\frac{1}{2}\log(M)-C_0 \geq 0$. We have thus 
$\frac{1}{2}\log\left(\frac{x_2^0}{x_1^0}\right)-C_0 \geq 0.$
From Lemma \ref{lem5} it follows that $\frac{x_2(t)}{x_1(t)}$ is growing in time and that we have
$$
\frac{d}{dt}\left(\frac{x_2}{x_1}\right)\geq \frac{C_1}{2} \frac{x_2}{x_1}\log\left(\frac{x_2}{x_1}\right),
$$ 
or $\frac{d}{dt}\log \left(\frac{x_2}{x_1}\right)\geq \frac{C_1}{2} \log\left(\frac{x_2}{x_1}\right)$
for all times. This clearly implies that $\frac{x_2}{x_1}$ grows double-exponentially.

\begin{remark}
In \cite{kiselev2013small}, the Biot-Savart law is decomposed into a main contribution and an error term. In our case \eqref{oddBiotSavart}, 
the main contribution would be
\beq\label{hyperbolicApprox}
-x\int_x^\infty \frac{\omega(y)}{y}~dy.
\eeq
If we replace \eqref{oddBiotSavart}  by \eqref{hyperbolicApprox}, then double-exponential growth of $\frac{x_2}{x_1}$ can be proven by a straightforward argument. In this case, the computation for the estimate in Lemma \ref{lem5} becomes much easier.
\end{remark}

\section{$\alpha$-patch 1D model}
In this section, we consider the 1D model equation
\beq\label{eq:model}
\omega_t+u\omega_x=0
\eeq
with a different Biot-Savart law
\beq
u(x,t)= (-\Delta)^{-\alpha/2}\omega(x,t) =-c_\alpha\int_{\R}|y-x|^{-(1-\alpha)}\omega(y,t)~dy,\qquad\alpha\in(0,1)
\eeq
For convenience, we will assume the constant $c_\alpha$ associated with the fractional Laplacian is $1$, and we write $\ga = 1-\al$.

This problem has been studied in \cite{dong2014one}, where local existence and uniqueness results for smooth initial data are proven. From these, we can show 
that this equation preserves oddness and $u(0,t)=0$ holds with odd initial datum. For odd data, we can write
\beq
u(x,t)=-\int_0^\infty k(x,y)\omega(y,t)~dy
\eeq
where $k(x,y)=|y-x|^{-\ga}-|y+x|^{-\ga}$. Note that $k(x,y)\ge 0$ for $x\neq y\in (0,\infty)$.

Following similar ideas as for 1D Euler, we specifiy our initial data $\omega_0$ as follows:
\begin{itemize}
\item
Pick $0<x_1(0), x_2(0)$ with $M x_1(0)<x_2(0)$. Let $\omega_0$ be smooth, odd, $\omega_0(x)\geq 0$ for $x>0$ and have its support in $[-2 x_2(0), 2 x_2(0)]$.
$M > 1$ is to be chosen below. Moreover, let $\omega_0$ be bounded by $1$, smoothly increasing in the interval $[0, x_1(0)]$ and $\omega_0 = 1$ between $x_1(0)$ and $x_2(0)$.
\end{itemize}
As long as the solution remains smooth, the general structure of the solution does not change.
Let $x_1(t), x_2(t)$ be again the position of the particles starting at $x_1(0), x_2(0)$. 
\begin{theorem}\label{blowupPatch}
There exist a choice of $x_1(0), x_2(0), M$ and a time $T>0$ such that the smooth solution of   
$(\ref{eq:model})$ for the above initial data cannot be continued beyond $T$. Provided the solution 
remains smooth on the time interval $[0, T)$, the particle starting at $x_1(0)$ reaches the origin at time $t=T$, i.e.
\beq
\lim_{t\to T} x_1(t) = 0.
\eeq
In this sense, the solution forms a ``shock".
\end{theorem}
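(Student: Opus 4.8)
The plan is to mimic the hyperbolic argument for the $1$D Euler model (Lemma~\ref{lem5} and Theorem~\ref{doubleExpGrowth}), but now the stronger singularity of the kernel $|y-x|^{-\ga}$ will drive the inner particle $x_1(t)$ all the way to the origin in finite time, rather than merely forcing double-exponential separation. Throughout I work on the maximal interval $[0,T)$ of smooth existence. On it, using the local theory of \cite{dong2014one} together with transport of $\om$ along the flow $\Phi_t$: oddness is preserved, $0\le\om\le1$, $\om\equiv1$ on $[x_1(t),x_2(t)]$, $\om$ increases from $0$ to $1$ on $[0,x_1(t)]$, and the support stays in $[-2x_2(0),2x_2(0)]$; since $u(x,t)\le0$ for $x>0$, every particle in $(0,\infty)$ moves monotonically toward $0$, so $x_1(t),x_2(t)$ are positive and decreasing. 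Everything is then reduced to controlling these two trajectories via $\dot x_i=u(x_i,t)$.

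The first key estimate is a lower bound for the inward speed at $x_1$. Using $k(x_1,y)\ge0$, $\om\ge0$, and $\om\equiv1$ on $[x_1,x_2]$,
\[
|u(x_1,t)|=\int_0^\infty k(x_1,y)\,\om(y,t)\,dy\ \ge\ \int_{x_1}^{x_2}\bigl[(y-x_1)^{-\ga}-(y+x_1)^{-\ga}\bigr]\,dy
=\frac{x_1^{\al}}{\al}\Bigl[(\lambda-1)^{\al}-(\lambda+1)^{\al}+2^{\al}\Bigr],
\]
where $\lambda=x_2/x_1$. Monotonicity of the bracket in $\lambda$ together with strict subadditivity of $t\mapsto t^{\al}$ shows the bracket is $\ge 1+2^{\al}-3^{\al}>0$ as soon as $\lambda\ge2$, giving $|u(x_1,t)|\ge c_\al\,x_1(t)^{\al}$ with $c_\al=(1+2^{\al}-3^{\al})/\al>0$. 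The second key estimate keeps the ratio large. Writing $\kappa(s):=\bigl(|1-s|^{-\ga}-(1+s)^{-\ga}\bigr)/s$, a change of variables gives
\[
\frac{d}{dt}\log\frac{x_2}{x_1}=\frac{u(x_2)}{x_2}-\frac{u(x_1)}{x_1}=\int_0^\infty y^{-\ga-1}\Bigl[\kappa\!\left(\tfrac{x_1}{y}\right)-\kappa\!\left(\tfrac{x_2}{y}\right)\Bigr]\om(y,t)\,dy,
\]
which I would bound exactly as in Lemma~\ref{lem5}, splitting the $y$-integral over $(0,2x_1)$, $(2x_1,\tfrac12 x_2)$, $(\tfrac12 x_2,2x_2)$, $(2x_2,\infty)$. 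The facts about $\kappa$ that I need are: $\kappa$ is increasing on $(0,1)$ with $\kappa(0^+)=2\ga$, and decreasing on $(1,\infty)$ with $\kappa(s)\to0$ (these follow from convexity of $s\mapsto(1-s)^{-\ga}-(1+s)^{-\ga}$, which vanishes at $0$, and monotonicity of $s\mapsto(s-1)^{-\ga}-(s+1)^{-\ga}$). On the middle piece $\om\equiv1$, and there $x_1/y\le\tfrac12<1<2\le x_2/y$, so $\kappa(x_1/y)-\kappa(x_2/y)\ge 2\ga-\tfrac12(1-3^{-\ga})>0$, producing a positive contribution of size $\gtrsim x_1^{-\ga}$; the other three pieces are each $\ge-\eps(M)\,x_1^{-\ga}$ with $\eps(M)\to0$ as $M\to\infty$. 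Hence, after fixing $M$ large (depending only on $\ga$) and taking data with $Mx_1(0)<x_2(0)$, a first-crossing argument gives $x_2(t)/x_1(t)>M\ge2$ for all $t\in[0,T)$.

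Combining the two estimates, $\dot x_1(t)=u(x_1,t)\le-c_\al x_1(t)^{\al}$ on $[0,T)$, so $\tfrac{d}{dt}x_1^{1-\al}\le-(1-\al)c_\al$ and
\[
x_1(t)^{1-\al}\le x_1(0)^{1-\al}-(1-\al)c_\al\,t\qquad(0\le t<T).
\]
Since the left side is nonnegative, $T\le T_*:=x_1(0)^{1-\al}/\bigl((1-\al)c_\al\bigr)<\infty$: the smooth solution cannot exist past $T_*$, which proves the finite-time breakdown and pins $T$ in terms of the data.

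It remains to identify the breakdown with $x_1(t)\to0$. Since $\om(\cdot,t)$ is smooth and compactly supported, $u=-|\cdot|^{-\ga}\ast\om\in C^\infty$, so while the solution is smooth the ODE $\dot x=u(x,t)$ has a locally Lipschitz right-hand side vanishing at $0$; by uniqueness no particle reaches the origin in finite time, hence $x_1(t)>0$ on $[0,T)$ and (being decreasing) $x_1(t)$ has a limit $x_1(T)\ge0$. The step I would argue is: if $x_1(T)=\delta>0$ then the solution extends past $T$, contradicting maximality. When $x_1$ (hence $x_2$ and the support) stays bounded below and above, the transition of $\om$ from $0$ to $1$ happens across an interval of length $\ge\delta$; a bound $\|u_x(\cdot,t)\|_{L^\infty}\lesssim 1+\|\om_x(\cdot,t)\|_{L^\infty}^{\ga}$ (proved like Lemma~\ref{lem1}, splitting the principal-value integral at scale $\sim\|\om_x\|_\infty^{-1}$) together with $\tfrac{D}{Dt}\log|\om_x|=-u_x$ and the fact that the compression of $[0,x_1(t)]$ stays comparable to $x_1(t)$ should keep $\|\om(\cdot,t)\|_{H^m}$ finite on $[0,T]$. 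Then $x_1(T)=0$, i.e.\ $\lim_{t\to T}x_1(t)=0$, which is the claimed shock. The main obstacle is exactly this last step: the naive bound only yields $\tfrac{d}{dt}\|\om_x\|_\infty\lesssim\|\om_x\|_\infty^{1+\ga}$, which by itself permits finite-time gradient blow-up, so one must genuinely exploit that, for this particular profile, the growth of $\|\om_x\|_\infty$ is governed entirely by $x_1(t)^{-1}$ — turning ``$x_1$ bounded below'' into a bona fide continuation criterion.
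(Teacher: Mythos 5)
Your core blowup mechanism is the same as the paper's: a lower bound $|u(x_1(t),t)|\gtrsim x_1(t)^{1-\gamma}$ obtained by integrating the nonnegative kernel over the plateau where $\omega\equiv 1$, followed by the ODE comparison $\frac{d}{dt}x_1^{\gamma}\le -c<0$, which forces $x_1$ to reach the origin --- impossible for a smooth flow fixing the origin --- by a time $T_*$ depending only on $x_1(0)$. Where you genuinely diverge is in how the ratio $x_2/x_1$ is kept large. You run a dynamic estimate on $\frac{d}{dt}\log(x_2/x_1)$ with a four-piece decomposition modeled on Lemma~\ref{lem5} and a first-crossing (barrier) argument; this is workable (your monotonicity claims for $\kappa$ check out, and the three non-plateau pieces are indeed $O(M^{-\gamma})\,x_1^{-\gamma}$ once one uses $k\ge 0$ on the piece near the origin), but it is considerably heavier than necessary. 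The paper instead exploits that $T_*$ is independent of $x_2(0)$: since the support never grows (because $u\le 0$ for $x>0$), one has the uniform bound $|u(x_2(t),t)|\le C x_2(0)^{1-\gamma}$, so $x_2$ can only travel a bounded distance by time $T_*$, and simply choosing $x_2(0)$ large after fixing $x_1(0)$ gives $Mx_1(t)\le Mx_1(0)<x_2(t)$ with no dynamics at all. Your route buys a condition on the data involving only the ratio $M$ (no separate largeness of $x_2(0)$), at the cost of substantially more computation. Finally, the ``continuation criterion'' you flag as the main obstacle is not required for the theorem as stated: the second assertion is explicitly conditional (``provided the solution remains smooth on $[0,T)$''), so one only needs that a smooth solution cannot persist past $T_*$, and that if it does remain smooth up to the hitting time then $x_1\to 0$ there; neither the theorem nor the paper's proof rules out an earlier breakdown with $x_1$ bounded away from zero. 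So your argument does establish the stated result; the unresolved step concerns a strictly stronger claim.
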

\begin{remark}
In \cite{dong2014one}, the existence of blowup solutions to $(\ref{eq:model})$ is shown using 
energy methods. The advantage is that they are able to include a dissipation term. However,  
it is difficult to see the geometric blowup mechanism clearly using energy methods. 
Our proof for the inviscid case uses the dynamics of the solution and gives a more intuitive picture of the blowup,
and is easily generalized to other even kernels having the same singular behavior.
\end{remark}
In the rest of this section, we will prove Theorem \ref{blowupPatch}. So assume that for arbitrary choice
of $x_1(0), x_2(0), M$, we have a smooth solution $\om$ defined for all times $t\in [0,\infty)$.

First of all, we track the movement of the particle starting at $x_1(0)$, which is the following Lemma.
\begin{lemma}
There exists a universal constant $M>2$ so that if $M x_1(t)\leq x_2(t)$, the velocity at $x_1(t)$ will satisfy 
\begin{equation}\label{blup}
u(x_1(t),t)\leq -Cx_1(t)^{1-\ga},
\end{equation}
for some universal constant $C$.
\end{lemma}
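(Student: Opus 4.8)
The plan is to throw away everything in the Biot--Savart integral except the part we fully control, namely the plateau where $\omega\equiv1$. Since $\omega_0$ is odd and nonnegative on $(0,\infty)$ and, as long as the solution stays smooth, the flow preserves both the oddness and the qualitative structure of $\omega_0$, at every time $t$ we have $\omega(y,t)\ge0$ for $y>0$ and $\omega(y,t)\equiv1$ for $y\in[x_1(t),x_2(t)]$ (this interval being $\Phi_t$ applied to the plateau $[x_1(0),x_2(0)]$). Using $k(x,y)\ge0$ for distinct $x,y\in(0,\infty)$, this gives
\[
-u(x_1(t),t)=\int_0^\infty k\big(x_1(t),y\big)\,\omega(y,t)\,dy\ \ge\ \int_{x_1(t)}^{x_2(t)} k\big(x_1(t),y\big)\,dy ,
\]
so it suffices to bound the last integral below by $C\,x_1(t)^{1-\gamma}$.

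For this I would rescale. Abbreviating $x_1=x_1(t)$, $x_2=x_2(t)$ and $R:=x_2/x_1\ (\ge M>1)$, and substituting $y=x_1 s$,
\[
\int_{x_1}^{x_2}\big((y-x_1)^{-\gamma}-(y+x_1)^{-\gamma}\big)\,dy
= x_1^{1-\gamma}\int_1^{R}\big((s-1)^{-\gamma}-(s+1)^{-\gamma}\big)\,ds .
\]
The integral on the right is elementary — the singularity at $s=1$ is integrable since $\gamma<1$ — and equals $\tfrac{1}{1-\gamma}\,g(R)$ with $g(R):=(R-1)^{1-\gamma}-(R+1)^{1-\gamma}+2^{1-\gamma}$.

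It then remains only to see that $g$ stays above a fixed positive number on $[M,\infty)$. Since $g(1)=0$ and $g'(R)=(1-\gamma)\big((R-1)^{-\gamma}-(R+1)^{-\gamma}\big)>0$ for $R>1$, the function $g$ is strictly increasing, hence $g(R)\ge g(M)>0$ for all $R\ge M$. This yields $-u(x_1(t),t)\ge \tfrac{g(M)}{1-\gamma}\,x_1(t)^{1-\gamma}$, i.e. the claim with $C=g(M)/(1-\gamma)$; in fact any $M>1$ works for this lemma, the value $M>2$ in the statement being chosen only for compatibility with the remaining estimates in the proof of Theorem~\ref{blowupPatch}. I do not expect a genuine obstacle here: the only points needing a word of care are the structure-preservation of $\omega$ under the flow (inherited from the smooth existence theory of \cite{dong2014one}) and the observation that we use nothing beyond $\omega\ge0$ together with $\omega\equiv1$ on the plateau, discarding the other contributions with the correct sign; since $\gamma\in(0,1)$ is fixed throughout this section, $C$ is a genuine positive constant depending only on $\gamma$ and $M$.
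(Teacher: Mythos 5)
Your argument is correct, and it rests on the same core idea as the paper's proof: discard all contributions to the Biot--Savart integral except the plateau where $\omega\equiv 1$ (using $k\ge 0$ and $\omega\ge 0$ on $(0,\infty)$), and then bound the resulting explicit integral from below by $C x_1^{1-\gamma}$. The difference is in the execution of that last step. The paper integrates only over $[2x_1,x_2]$, obtaining $c_\gamma x_1^{1-\gamma}\bigl[(3^{1-\gamma}-1)+ (x_2/x_1)^{1-\gamma} f(x_1/x_2)\bigr]$ with $f(s)=(1-s)^{1-\gamma}-(1+s)^{1-\gamma}$, and must then choose $M$ large enough that the (negative) second term, of size $O(M^{-\gamma})$, does not swallow the fixed positive term $3^{1-\gamma}-1$. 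You instead integrate over the whole plateau $[x_1,x_2]$ (harmless, since the singularity of $(y-x_1)^{-\gamma}$ at $y=x_1$ is integrable for $\gamma<1$), compute the integral exactly as $\tfrac{1}{1-\gamma}g(R)\,x_1^{1-\gamma}$ with $g(R)=(R-1)^{1-\gamma}-(R+1)^{1-\gamma}+2^{1-\gamma}$, and observe that $g(1)=0$ and $g'>0$, so $g(R)\ge g(M)>0$ for any $M>1$. This buys a slightly cleaner and sharper statement --- no largeness condition on $M$ is needed for this lemma, only $M>1$, with the constant $C=g(M)/(1-\gamma)$ depending on $M$ and $\gamma$ --- at the cost of an exact antiderivative computation rather than a one-line Taylor bound; both routes are equally rigorous, and your remark that $M>2$ (in fact, $M$ large) is only needed elsewhere in the proof of Theorem~\ref{blowupPatch} is accurate.
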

 \begin{proof}
    Let $u_1=u(x_1(t),t)$. Since $k,\omega \ge 0$ on $(0,\infty)$
\begin{align*}
-u_1  &\ge \int_{2x_1}^{x_2}k(x_1,y)~dy\\
   &=c_\ga\left[-(x_2+x_1)^{1-\ga}+(x_2-x_1)^{1-\ga}+(3x_1)^{1-\ga}-x_1^{1-\ga}\right]\\
  &=c_\ga\left[(3^{1-\ga}-1)x_1^{1-\ga}+(x_2-x_1)^{1-\ga}-(x_2+x_1)^{1-\ga}\right]\\
  &=c_\ga x_1^{1-\ga}\left[(3^{1-\ga}-1)+\frac{1}{x_1^{1-\ga}}\left((x_2-x_1)^{1-\ga}-(x_2+x_1)^{1-\ga}\right)\right]
\end{align*}
for some constant $c_\ga>0$. Note that $(3^{1-\ga}-1)>0$. We can write
\begin{align*}
\frac{1}{x_1^{1-\ga}}(x_2-x_1)^{1-\ga}-(x_2+x_1)^{1-\ga}
& = \frac{x_2^{1-\ga}}{x_1^{1-\ga}}\left[\left(1-\frac{x_1}{x_2}\right)^{1-\ga}-\left(1+\frac{x_1}{x_2}\right)^{1-\ga}\right]\\
& =: \frac{x_2^{1-\ga}}{x_1^{1-\ga}} f(x_1/x_2).
\end{align*}
There exists a constant $C>0$ with $|f(x_1/x_2)|\leq C|x_1/x_2|$ for $|x_1/x_2| \leq 1/2$, and so
\begin{align*}
-u_1 \ge c_\ga x_1^{1-\ga}\left[(3^{1-\ga}-1) - C M^{-\ga}\right]
\end{align*}
if $M x_1(t) \leq x_2(t)$. Now choose $M$ large enough so that $C M^{-\ga}$ is smaller than the number $\frac{1}{2}(3^{1-\ga}-1)$.
\end{proof}
This estimate of velocity field will lead to a blowup in finite time, provided we can show $M x_1(t)\leq x_2(t)$. More precisely,  
$$
\frac{d}{dt}x_1(t)= u(x_1)\leq - C x_1^{1-\ga}, 
$$
implying
$$
x_1\leq C (x_1(0)^{\ga} - C t)^{\frac{1}{\ga}}.
$$
This shows that no later than $T_0 := C^{-1} x_1(0)^{\ga}$, the particle $x_1(t)$ will reach the origin, and the solution cannot be continued smoothly. Note 
that $T_0$ does not depend on $x_2(0)$.

It remains therefore to control the motion of $x_2(t)$, concluding the proof.
\begin{lemma}
For $x_2(0)$ large enough, $M x_1(t) < x_2(t)$ for $t\in [0, T_0)$.
\end{lemma}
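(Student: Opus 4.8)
The plan is to exploit that the flow pushes every positive particle monotonically toward the origin, so that $x_1(t)$ can only move left — giving the crude bound $x_1(t)\le x_1(0)$ for all $t$ — and then to show that on the \emph{fixed-length} interval $[0,T_0)$ the particle $x_2(t)$ cannot travel far, because there $|u(x_2(t),t)|$ is controlled by a constant depending only on $x_2(0)^{1-\ga}$ once the support of $\om$ is pinned down. Sending $x_2(0)\to\infty$ with $M$ and $x_1(0)$ held fixed then forces $x_2(t)$ to stay above $Mx_1(0)\ge Mx_1(t)$.

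First I would record the monotonicity. As long as the solution is smooth the qualitative structure is preserved: $\om(\cdot,t)$ is odd, $0\le\om(y,t)\le1$ for $y>0$, and it is increasing on $[0,x_1(t)]$ and identically $1$ on $[x_1(t),x_2(t)]$. Since $k(x,y)\ge0$ for $x\ne y$ in $(0,\infty)$ and $\om\ge0$ there, the Biot--Savart law gives $u(x,t)\le0$ for every $x>0$. Hence $t\mapsto\Phi_t(x)$ is nonincreasing for each $x>0$; in particular $x_1(t)\le x_1(0)$ and $x_2(t)\le x_2(0)$ for all $t\ge0$, and the right endpoint of $\supp\om(\cdot,t)$, which starts at $2x_2(0)$, stays in $(0,2x_2(0)]$, so $\supp\om(\cdot,t)\subset[-2x_2(0),2x_2(0)]$ for all $t$.

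Next, using $0\le\om\le1$, the support bound and $k(x_2,y)\le|y-x_2|^{-\ga}$, I would estimate, for every $t\ge0$,
\[
|u(x_2(t),t)|=\int_0^{2x_2(0)}k(x_2(t),y)\,\om(y,t)\,dy\le\int_0^{2x_2(0)}|y-x_2(t)|^{-\ga}\,dy\le C(\ga)\,x_2(0)^{1-\ga},
\]
the last step using $0<x_2(t)\le x_2(0)$ and $1-\ga>0$. Integrating $\frac{d}{dt}x_2(t)=u(x_2(t),t)$ gives $x_2(t)\ge x_2(0)-C(\ga)x_2(0)^{1-\ga}t$. Since $T_0=C^{-1}x_1(0)^{\ga}$ depends only on $x_1(0)$ (and on $\ga,C$), for $t\in[0,T_0)$ this yields
\[
x_2(t)\ge x_2(0)-C(\ga)C^{-1}x_1(0)^{\ga}\,x_2(0)^{1-\ga}=x_2(0)\Big(1-C(\ga)C^{-1}\big(x_1(0)/x_2(0)\big)^{\ga}\Big).
\]
With $M$ and $x_1(0)$ fixed, the right-hand side tends to $x_2(0)$ and $Mx_1(0)/x_2(0)\to0$ as $x_2(0)\to\infty$, so there is $R_0$ with $x_2(0)-C(\ga)C^{-1}x_1(0)^{\ga}x_2(0)^{1-\ga}>Mx_1(0)$ whenever $x_2(0)\ge R_0$. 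For such $x_2(0)$ and any $t\in[0,T_0)$, combining the last display (which is strict for $t>0$ since $t<T_0$) with $x_1(t)\le x_1(0)$ gives $x_2(t)>Mx_1(0)\ge Mx_1(t)$; at $t=0$ one uses directly that $x_2(0)>Mx_1(0)$ by the choice of the data. This is the assertion.

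The argument involves no delicate analysis; the points needing care are (i) justifying that the structure of $\om(\cdot,t)$ — nonnegativity on $(0,\infty)$, the plateau on $[x_1(t),x_2(t)]$, compact support — persists for smooth solutions, which is the ``structure is preserved'' fact used throughout the section and follows from transport along $\Phi_t$ together with $u(0,t)=0$; and (ii) keeping every constant independent of $x_2(0)$, so that $x_2(0)$ remains a genuinely free large parameter. I would stress that one must \emph{not} invoke the sharp bound $u(x_1(t),t)\le-Cx_1(t)^{1-\ga}$ here — that would be circular, since it presupposes $Mx_1(t)\le x_2(t)$ — and that the crude monotonicity $x_1(t)\le x_1(0)$ is all that is needed on the right-hand side.
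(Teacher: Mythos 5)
Your proof is correct and follows essentially the same route as the paper's: bound the support of $\om(\cdot,t)$ by $[-2x_2(0),2x_2(0)]$ using $u\le 0$ on $(0,\infty)$, deduce $|u(x_2(t),t)|\le C x_2(0)^{1-\ga}$, integrate over $[0,T_0)$ (whose length is independent of $x_2(0)$), and take $x_2(0)$ large. The additional cautions you record (monotonicity $x_1(t)\le x_1(0)$, avoiding circularity with the velocity lower bound at $x_1$) are consistent with, and implicit in, the paper's argument.
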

\begin{proof}
We write $u(x_2(t),t)=u_2$. Observe that the support of 
$\om(\cdot, t)$ is always contained in $[-2 x_2(0), 2 x_2(0)]$ 
because of $u(x, t) \leq 0$ for $x>0$.

Next we find an upper bound on $u_2$:
\beq
|u_2(t)|\leq \int_{-2 x_2(0)}^{2 x_2(0)} |y-x|^{-\ga} \leq C x_2(0)^{1-\ga}. 
\eeq
Hence,
\beq
x_2(t) \geq x_2(0) - \int_0^{T_0} |u_2(s)|~ds\geq x_2(0) (1 - C x_2(0)^{-\ga} T_0).
\eeq
Now choose $x_2(0)$ so large that $M x_1(0) < x_2(0) (1 - C x_2(0)^{-\ga} T_0)$.
But then
$$M x_1(t)\leq M x_1(0) < x_2(0) (1 - C x_2(0)^{-\ga} T_0) \leq x_2(t),$$
giving the statement of the Lemma.
\end{proof}

\subsection*{Acknowledgments.} The authors would like to thank Prof.~Alexander Kiselev for helpful discussions. TD would like to acknowledge the support of NSF grant 1453199. VH expresses his gratitude to
the German Research Foundation (DFG) for continued support through grants FOR 5156/1-1 and FOR 5156/1-2. VH also acknowledges partial support by NSF grant
NSF-DMS 1412023. XX would like to acknowledge the support of NSF grant 1535653.

\bibliographystyle{plain}

\end{document}